\newtheorem{theo}{Theorem}[section]
\newtheorem{lem}[theo]{Lemma}
\newtheorem{prop}[theo]{Proposition}
\newtheorem{defin}[theo]{Definition}
\theoremstyle{definition}
\newtheorem{rem}[theo]{Remark}
\newcommand{\PP}{\mathbb{P}}
\newcommand{\oo}{\mathcal{O}}
\title{On asymptotic bounds for the number of\\ irreducible components of the moduli space of\\ surfaces of general type}
\author{Michael L\"onne and Matteo Penegini}
\address{Michael L\"onne\\ Institut f\"ur Algebraische Geometrie,  Leibniz Universit\"at Hannover, Welfengarten 1\\ D-30167 Hannover, Germany}
\email{loenne@math.uni-hannover.de}
\address{Matteo Penegini\\
Dipartimento di Matematica \emph{``Federigo Enriques''}, Universit\`{a} degli Studi di Milano, Via Saldini 50, I-20133 Milano, Italy} \email{matteo.penegini@unimi.it}
\subjclass[2010]{14J10,14J29,20D15,20D25,20H10,30F99.}
\begin{document}


\maketitle


\begin{abstract}
In this paper we investigate the asymptotic growth of the number of irreducible and connected components of the moduli space of
surfaces of general type corresponding to certain families of
surfaces isogenous to a higher product. We obtain a higher growth then the previous growth by Manetti \cite{Man}.
\end{abstract}

\section{Introduction}
 
It is known that, once two positive integers $(y,x)$ are fixed, the number of irreducible components $\iota(x,y)$ of the moduli space of surfaces of general type with $K^2=y$ and $\chi=x$ is bounded from above by a function of $y$.
In fact, Catanese proved that the number $\iota^0(y,x)$ of components containing regular surfaces, ie.\ $q(S)=0$, has
an exponential upper bound in $K^2$, more precisely \cite[p.592]{Cat92} gives the following inequality
\[
\iota^0(x,y) \leq y^{77y^2}. 
\]

There are also some results showing how close one can get to this bound from below. In
\cite{Man}, for example, Manetti constructed a sequence $S_n$ of
simply connected surfaces of general type with $K^2_{S_n}=:y_n$, such that the lower
bound for the number $\delta(S_n)$ of $\mathcal{C}^{\infty}$ inequivalent complex structures on the oriented topological $4$-manifold underlying $S_n$  is
\[
\delta(S_n) \geq y_n^{\frac{1}{5}\ln y_n}.
\]

Using group theoretical methods, we are able to describe the asymptotic growth of the number of irreducible and connected components of the moduli space of surfaces
of general type in certain sequences of surfaces. More
precisely, we apply the definition and some properties of
surfaces isogenous to a product of curves and we reduce the geometric problem of finding connected components into the algebraic one of counting
some subfamilies of $2$-groups, which can be effectively computed. Similar methods were first applied by Garion and the second author in \cite{GP11}, see also \cite{P13}. Our main result is the following.

\begin{theo}\label{thm main}\sloppy Let $h$ be number of connected components containing surfaces isogenous to a product of curves of irregularity $q(S)=q \geq 0$, admitting a group of order $2^{3s}$ and ramification structure of type $((0 | 2^{2s+2}),(q | 2^{2s-2q+2}))$. Then for $s \rightarrow \infty$ we have  
\[ h \geq 2^{\frac{2}{9}\big(\ln x_s\big)^3}.
\]
In particular, we obtain sequences $y_s$ and $x_s=y_s/8$ with
\[\iota^0(x_s,y_s) \geq y_s^{\frac{2}{13}(\ln y_s)^2}.
\]
\end{theo}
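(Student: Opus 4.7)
The plan is to convert the geometric enumeration into a group-theoretic count via the standard equivalence for surfaces isogenous to a product, and then invoke Higman's asymptotic enumeration of $p$-groups. Recall that the connected components of the moduli space containing surfaces $S = (C_1 \times C_2)/G$ of the stated form correspond to orbits of pairs of \emph{generating vectors} for $G$ of the two prescribed signatures, under the diagonal action of $\mathrm{Aut}(G)$ together with the two orbifold mapping class groups. A direct Riemann--Hurwitz computation gives
\[ 2 g_i - 2 = |G| \bigl( 2 g_i' - 2 + r_i/2 \bigr) = 2^{3s}(s-1), \qquad i = 1, 2, \]
so that $x_s = \chi(S) = (g_1-1)(g_2-1)/|G| = 2^{3s-2}(s-1)^2$ and $y_s = K_S^2 = 8\,x_s$; in particular $\ln x_s = 3s\ln 2 + O(\log s)$.

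The core of the proof is to produce at least $2^{2 s^3(1 + o(1))}$ isomorphism classes of groups $G$ of order $2^{3s}$ each admitting ramification structures of both types $(0\mid 2^{2s+2})$ and $(q \mid 2^{2s - 2q + 2})$. For this I would invoke Higman's enumeration theorem, which supplies at least $2^{\frac{2}{27}n^3(1+o(1))}$ isomorphism classes of groups of order $2^n$, realized as central extensions of an elementary abelian $2$-group of rank $r \approx 2n/3$ by an elementary abelian $2$-group in the centre. Taking $n = 3s$ yields the main term $2^{2 s^3}$. For each such $G$ one exhibits explicit generating vectors: for the signature $(0 \mid 2^{2s+2})$, take a spanning collection of $2s+1$ involutive lifts of a basis of the elementary-abelian quotient and append the inverse of their product (also an involution, provided that product lies in the central socle); for $(q \mid 2^{2s-2q+2})$ prescribe $2q$ hyperbolic pairs $a_i, b_i$ and absorb their commutator into a closing run of $2s - 2q + 2$ involutions.

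To finish, one discounts by the size of the equivalence relation: $|\mathrm{Aut}(G)| \cdot |\mathrm{MCG}_{0, 2s+2}| \cdot |\mathrm{MCG}_{q, 2s-2q+2}|$ grows only like $2^{O(s^2 \log s)}$, which is negligible compared to $2^{2 s^3}$. Hence $h \geq 2^{2s^3(1+o(1))}$. Substituting $\ln x_s \sim 3s \ln 2$ yields $\tfrac{2}{9}(\ln x_s)^3 \sim 6 s^3 (\ln 2)^3 \approx 2 s^3$, which proves the first bound. For the second, using $\ln y_s = \ln x_s + \ln 8 \sim 3s\ln 2$ one checks by the same arithmetic that $\tfrac{2}{13}(\ln y_s)^2 \log_2(y_s) \sim 2 s^3$, yielding $\iota^0(x_s, y_s) \geq y_s^{(2/13)(\ln y_s)^2}$.

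The main obstacle lies in the second paragraph: verifying that Higman's extremal $2$-groups really do admit generating vectors of \emph{both} prescribed orbifold signatures, with each designated generator of order \emph{exactly} $2$. The subtlety is that in a class-$2$ $2$-group the product of two involutions need not be an involution (it can have order $4$), so the closing element must be checked to lie in the socle. Should a positive proportion of Higman's groups fail this condition, one can restrict to the sub-family whose elementary-abelian quotient has rank at least $2s + 2$; a standard parameter count shows this sub-family remains doubly exponentially large, so the asymptotic bound survives.
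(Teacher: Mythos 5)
Your overall strategy is the same as the paper's (Higman--Sims enumeration of Frattini-class-$2$ $2$-groups of order $2^{3s}$, explicit generating vectors of the two signatures on each group, the correspondence between such data and components of the moduli space, then the asymptotic arithmetic), but there is a genuine gap in the middle step: you never address the freeness condition $\Sigma(\mathcal{V}_1)\cap\Sigma(\mathcal{V}_2)=\{1\}$. This disjointness of stabilizers is part of the definition of a ramification structure and is precisely what makes the diagonal action on $C_1\times C_2$ free, hence $(C_1\times C_2)/G$ a smooth surface isogenous to a product; a pair of generating vectors of the correct signatures is not enough. Since all your spherical elements are involutions in a group of class $2$, conjugation changes an element only by a central element of $H(G)$, so collisions between the two conjugacy-stabilizer sets are the generic situation and disjointness has to be engineered. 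The paper does exactly this: the spherical elements of $T_1$ map under $\Phi\colon G\to G/H(G)\cong(\ZZ/2\ZZ)^{2s}$ to the basis vectors and to two sums of $s$ basis vectors, while those of $T_2$ (resp.\ $T'_2$) map to sums of exactly two or three basis vectors, and Lemma \ref{lem_imageGen} then yields $\Sigma(\mathcal{V}_1)\cap\Sigma(\mathcal{V}_2)=\{1\}$. Your sketch (``involutive lifts of a basis'' for the first vector, an unspecified ``closing run of involutions'' for the second) gives no control over this, so as written the quotient could be singular and the identification of your count with components of $\mathcal{M}_{y,x}$ fails.

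Some secondary points. First, the discount by $|\Aut(G)|\cdot|\mathrm{MCG}_{0,2s+2}|\cdot|\mathrm{MCG}_{q,2s-2q+2}|$ is both unnecessary and, as stated, false: the mapping class groups are infinite (only their orbits on the finite set of generating vectors matter). What you actually need is that non-isomorphic groups already lie in distinct components, which follows from the exact sequence \eqref{eq_fundGroupS} and the uniqueness of the minimal realization (this is Lemma \ref{fund}); once each group carries one ramification structure, $h$ is at least the number of isomorphism classes of groups. Second, your numerical check ``$\sim 2s^3$'' hides the decisive facts $27(\ln 2)^3<9$ and $27(\ln 2)^2<13$: the margins are about $0.1\%$ and $0.2\%$, so the strict inequalities must be stated, otherwise the $o(1)$ terms could destroy the bound at the $s^3$ level. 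Third, your Riemann--Hurwitz line is wrong for $q>0$: $2g_2-2=2^{3s}(s+q-1)$, giving $\chi(S)=2^{3s-2}(s-1)(s+q-1)$ (the paper proves the theorem for $q=0$ and notes the irregular case is analogous, so this is a slip rather than a fatal error). Finally, the order-$2$ subtlety you flag for the closing elements is a real one (indeed $(g_1\cdots g_s)^2=\prod_{i<j}[g_j,g_i]$ need not vanish, a point the paper glosses over), but your proposed repair via the rank of the elementary abelian quotient does not by itself force these products to be trivial; the natural fix is to restrict to the subfamily of Higman's groups on which the finitely many relevant central products vanish, which costs only a factor $2^{O(s)}$ and leaves the $2^{2s^3(1+o(1))}$ count intact.
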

\noindent
Let us explain now the way in which this paper is organized.

The next section \emph{Preliminaries} is divided into three parts. In the first part we recall different moduli spaces of surfaces of general type that one can consider and how the number of their irreducible and connected components are related. In the second part we recall the definition and the properties of surfaces isogenous to a higher product and the its associated group theoretical data, the so called \emph{ramification structures}. The third part is purely group theoretical and we recall some generating properties of nilpotent groups of Frattini-class $2$. In particular we give the asymptotic growth of the number of certain subfamilies of $2$-groups.

In the two successive sections we construct infinitely families of regular (respectively irregular) surfaces isogenous to a product associated to nilpotent groups.

In the last section we prove the main theorem.
\bigskip

\textbf{Acknowledgement} The authors acknowledge the travel grant from a DAAD-VIGONI program. They also thank the Hausdorff Center (HIM) in Bonn for the kind hospitality. 

\bigskip

\textbf{Notation and conventions.} We work over the field
$\mathbb{C}$ of complex numbers. By ``\emph{surface}'' we mean a projective, non-singular surface $S$. For such a surface $\omega_S=\oo_S(K_S)$ denotes the canonical
class, $p_g(S)=h^0(S, \, \omega_S)$ is the \emph{geometric genus},
$q(S)=h^1(S, \, \omega_S)$ is the \emph{irregularity} and
$\chi(\mathcal{O}_S)=\chi(S)=1-q(S)+p_g(S)$ is the \emph{Euler-Poincar\'e
characteristic}.

\section{Preliminaries}
\subsection{The moduli space of surfaces of general type}
It is well known (see \cite{Gie77}) that once are fixed two positive integers $x,y$ there exists a quasiprojective coarse moduli space $\mathcal{M}_{y,x}$ of canonical models of surfaces of general type with $x=\chi(S)$ and $y=K^2_S$. We study the number $\iota(x,y)$ ($\gamma(x,y)$) of irreducible (resp. connected) components of  $\mathcal{M}_{y,x}$.

In addition, one can consider different structures on a surface of general type $S$, for example we can denote by $S^{top}$ the oriented topological $4$-manifold underlying $S$, or with $S^{diff}$ the oriented $\mathcal{C}^{\infty}$ manifold underlying $S$, and we can attach to $S$ several integers.

Let $\mathcal{M}^{top}(S)$  be the subspace of $\mathcal{M}_{y,x}$ corresponding to surfaces (orientedly) homeomorphic to $S$, and $\mathcal{M}^{{\it diff}}(S)$ be the subspace 
corresponding to surfaces diffeomorphic to $S$, we define:
\begin{description}
\item[$\delta(S)$] number of $\mathcal{C}^{\infty}$ inequivalent complex structures on $S^{top}$.
\item[$\gamma(S)$] number of connected components of $\mathcal{M}^{top}(S)$.
\item[$\iota(S)$] number of irreducible components of $\mathcal{M}^{top}(S)$.
\item[$\gamma(x,y)$] number of connected components of $\mathcal{M}_{y,x}$.
\item[$\iota(x,y)$] number of irreducible components of $\mathcal{M}_{y,x}$.
\item[$\iota^0(x,y)$] number of irreducible components of $\mathcal{M}_{y,x}$ of regular surfaces.
\end{description}

There are inequalities among the numbers above, the ones we need are the following:
\begin{equation}
\delta(S) \leq \gamma(S) \leq \iota(S) \leq \iota(x,y), \quad \gamma(x,y) \leq \iota(x,y).
\end{equation}
See also \cite{Cat92}. The union $\mathcal{M}$ over all admissible pairs of invariants ($y,x$) of
these spaces is called the \emph{moduli space of surfaces of
general type}.
\subsection{Surfaces isogenous to a product and their moduli}
\begin{defin}\label{def.isogenous} A surface $S$ is said to be \emph{isogenous to a higher product of curves}\index{Surface isogenous to a higher product of curves}
 if and only if, equivalently, either:
\begin{enumerate}
\item $S$ admits a finite unramified covering which is isomorphic
to a product of curves of genera at least two;
\item $S$ is a
quotient $(C_1 \times C_2)/G$, where $C_1$ and $C_2$ are curves of
genus at least two, and $G$ is a finite group acting freely on
$C_1 \times C_2$.
\end{enumerate}
\end{defin}
By Proposition 3.11 of \cite{cat00} the two properties
(1) and (2) are equivalent. Using the same notation as in Definition \ref{def.isogenous}, let
$S$ be a surface isogenous to a product, and $G^{\circ}:=G
\cap(Aut(C_1) \times Aut(C_2))$. Then $G^{\circ}$ acts on the two
factors $C_1$, $C_2$ and diagonally on the product $C_1 \times
C_2$. If $G^{\circ}$ acts faithfully on both curves, we say that
$S= (C_1 \times C_2)/G$ is a \emph{minimal
realization}. In \cite{cat00}
it is also proven that any
surface isogenous to a product
admits a unique minimal realization. 

\medskip

{\bf Assumptions I:} In the following we always assume:
\begin{enumerate}
\item Any surface $S$ isogenous to a product is given by its unique minimal realization;
\item $G^{\circ}=G$, this case is also known as \emph{unmixed type}, see \cite{cat00}.
\end{enumerate}

\nopagebreak
Under these assumption we have. \nopagebreak
\begin{prop}~\cite{cat00}\label{isoinv}
Let $S=(C_1 \times C_2)/G$ be a surface isogenous to a higher product of curves, then $S$ is a minimal surface of general type with the following invariants:
\begin{equation}\label{eq.chi.isot.fib}
\chi(S)=\frac{(g(C_1)-1)(g(C_2)-1)}{|G|},
\quad
e(S)=4 \chi(S),
\quad
K^2_S=8 \chi(S).
\end{equation}
The irregularity of these surfaces is computed by
\begin{equation}\label{eq_irregIsoToProd}
 q(S)=g(C_1/G)+g(C_2/G).
\end{equation}
Moreover the fundamental group $\pi_1(S)$ fits in the following short exact sequence of groups
\begin{equation}\label{eq_fundGroupS}
1 \longrightarrow \pi_1(C_1) \times \pi_1(C_2) \longrightarrow \pi_1(S) \longrightarrow G \longrightarrow 1.
\end{equation}
\end{prop}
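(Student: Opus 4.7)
The plan is to transfer known invariants of $C_1\times C_2$ to $S$ through the quotient map $\pi\colon C_1\times C_2\to S$. Since $G$ acts freely, $\pi$ is a finite étale Galois cover of degree $|G|$, so the standard multiplicativity of $e$, $\chi(\oo)$, and $K^2$ under such covers does most of the work. Moreover, because $g(C_i)\ge 2$, the canonical class $K_{C_1\times C_2}=p_1^*K_{C_1}+p_2^*K_{C_2}$ is ample by Nakai--Moishezon, and as $\pi$ is étale one has $K_{C_1\times C_2}=\pi^*K_S$; ampleness descends, so $S$ is smooth, minimal, and of general type.

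For the numerical invariants, Künneth gives $e(C_1\times C_2)=e(C_1)e(C_2)=4(g_1-1)(g_2-1)$ and $\chi(\oo_{C_1\times C_2})=(g_1-1)(g_2-1)$, while direct intersection theory on a product yields $K_{C_1\times C_2}^2=2\deg K_{C_1}\cdot\deg K_{C_2}=8(g_1-1)(g_2-1)$. Dividing each of these by $|G|$ under the étale cover of that degree produces the three formulas of the proposition. The irregularity follows from $H^0(S,\Omega^1_S)=H^0(C_1\times C_2,\Omega^1)^G$; Künneth together with the unmixed hypothesis $G=G^{\circ}$ splits this $G$-module as $H^0(C_1,\Omega^1)^G\oplus H^0(C_2,\Omega^1)^G$, and $G$-invariant holomorphic $1$-forms on $C_i$ coincide with pullbacks from $C_i/G$, giving $q(S)=g(C_1/G)+g(C_2/G)$.

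Finally, the free action of $G$ makes $\pi$ a regular topological covering with deck group $G$, so the standard covering space sequence reads $1\to\pi_1(C_1\times C_2)\to\pi_1(S)\to G\to 1$, and the product formula $\pi_1(C_1\times C_2)=\pi_1(C_1)\times\pi_1(C_2)$ completes the description. The only mildly subtle point throughout is the identification $H^0(C_i/G,\Omega^1)\cong H^0(C_i,\Omega^1)^G$, which holds even though the $G$-action on $C_i$ itself is ramified; everything else is a direct application of étale multiplicativity and Künneth, so no genuine obstacle is expected.
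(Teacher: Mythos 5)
The paper gives no proof of this proposition—it is quoted directly from Catanese \cite{cat00}—so there is nothing internal to compare against; your argument is correct and is essentially the standard one underlying the cited result: multiplicativity of $e$, $K^2$ and $\chi(\mathcal{O})$ under the free (hence finite \'etale) quotient, $K_{C_1\times C_2}=\pi^*K_S$ with ampleness descending along the finite map to get minimality and general type, the K\"unneth/diagonal-action splitting of invariant $1$-forms together with $H^0(C_i,\Omega^1)^G\cong H^0(C_i/G,\Omega^1)$ for the irregularity (correctly using the unmixed assumption $G=G^\circ$ that is in force in the paper), and covering-space theory for the fundamental group sequence. No gaps.
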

Among the nice features of surfaces isogenous to a product, one is that they can be obtained in a pure algebraic way. Let us briefly recall how.  
\begin{defin}\label{genrvect} Let $G$ be a finite group and
let
\[  0 \leq g', \ \ \ \ \ 2 \leq m_1 \leq \dots \leq m_r
\]
be integers. A \emph{system of generators} for $G$ of \emph{type}
$\tau:=(g'\mid m_1,...,m_r)$ is a $(2g'+r)-$tuple of elements of $G$:
\[ \mathcal{V}=(a_1,b_1,\dots,a_{g'},b_{g'},c_1,\dots,c_r)
\]
such that the following conditions are satisfied:
\begin{enumerate}
    \item $\langle a_1,b_1,\dots,a_{g'},b_{g'},c_1,\dots,c_r \rangle \cong G$.
    \item $ord(c_i) = m_i$ for all $1 \leq i \leq r$, denoting by $ord(c)$ the order of $c$.
    \item  $c_1 \cdot \ldots \cdot c_r \cdot \prod^{g'}_{i=1}[a_i,b_i]=1$.
\end{enumerate}
If such a $\mathcal{V}$ exists then $G$ is called \emph{$(g'\mid
m_1, \dots ,m_r)-$generated}.

Moreover, we call the $r$-tuple $(c_1,\dots,c_r)$ the \emph{spherical part} of $\mathcal{V}$ and if $g'=0$ a system of generators is simply said to be \emph{spherical}. 
\end{defin}

We shall also use the notation , for example, $(g' \mid 2^4,3^2)$ to indicate
the tuple $(g' \mid 2,2,2,2,3,3)$. 

\medskip

We have the following reformulation of the Riemann Existence Theorem.

\begin{prop}\label{Prop_RiemEx} A finite group $G$ acts as a
group of automorphisms of some compact Riemann surface $C$ of
genus $g$ if and only if there exist integers $g' \geq 0$ and $m_r
\geq m_{r-1} \geq \dots \geq m_1\geq 2$ such that $G$ is $(g'\mid
m_1,\dots,m_r)-$generated for some system of generators
$(a_1,b_1,\dots,a_{g'},b_{g'},c_1,\dots,c_r)$, and the following
Riemann-Hurwitz relation holds:
\begin{equation}\label{eq.RiemHurw} 2g-2=| G | (2g'-2 +
\sum^r_{i=1}(1-\frac{1}{m_i})).
\end{equation}
\end{prop}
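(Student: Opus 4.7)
This is the classical Riemann existence theorem in its group-theoretic form; I would prove the two implications using the correspondence between finite Galois branched covers of a compact Riemann surface and finite-index normal subgroups of the fundamental group of the complement of the branch locus.

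\emph{Forward direction} ($\Rightarrow$). Given a faithful action of $G$ on $C$, I set $C' := C/G$ with quotient map $\pi\colon C \to C'$; then $C'$ inherits the structure of a compact Riemann surface, of some genus $g'$. The map $\pi$ is branched precisely over the finite set $\{p_1,\dots,p_r\} \subset C'$ whose preimages have non-trivial stabilizers, and those stabilizers are necessarily cyclic, of some orders $m_i \geq 2$. Restricting $\pi$ to $C'^* := C' \setminus \{p_1,\dots,p_r\}$ yields a connected unramified Galois $G$-cover of $C'^*$, classified by a surjection
$$\varphi \colon \pi_1(C'^*) \twoheadrightarrow G.$$
Using the standard presentation
$$\pi_1(C'^*) = \langle a_1,b_1,\dots,a_{g'},b_{g'},c_1,\dots,c_r \mid c_1\cdots c_r \prod_{i=1}^{g'}[a_i,b_i] = 1\rangle,$$
with $c_i$ represented by a small loop around $p_i$, the tuple $\mathcal{V}=(\varphi(a_i),\varphi(b_i),\varphi(c_j))$ satisfies (1) and (3) by construction; condition (2) holds because the local model of $\pi$ near a point above $p_i$ is $z \mapsto z^{m_i}$, which forces $\ord(\varphi(c_i))=m_i$. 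Riemann-Hurwitz applied to $\pi$ then produces the stated genus relation.

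\emph{Backward direction} ($\Leftarrow$). Conversely, I fix a compact Riemann surface $C'$ of genus $g'$ with $r$ distinct marked points $p_1,\dots,p_r$. The generating system $\mathcal{V}$ induces, via the presentation above, a surjective homomorphism $\varphi\colon \pi_1(C'^*) \to G$, whose kernel determines a connected unramified Galois $G$-cover $C^\circ \to C'^*$ in the topological category; pulling back the complex structure makes $C^\circ$ a Riemann surface. Because $\ord(\varphi(c_i))=m_i$, the restriction of $C^\circ \to C'^*$ to a small punctured disc around $p_i$ is, up to conjugation by $G$, a disjoint union of copies of the standard cover $z \mapsto z^{m_i}$. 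One then analytically adjoins the missing points over each $p_i$ to obtain a compact Riemann surface $C$ on which $G$ acts holomorphically with $C/G \cong C'$, and Riemann-Hurwitz applied to this extended branched cover yields the stated genus $g$.

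\emph{Main obstacle.} The delicate step is the compactification in the converse direction: one must rigorously extend the unramified $G$-cover $C^\circ$ across each puncture and verify that the $G$-action extends holomorphically. This is handled by the local normal form $z\mapsto z^{m_i}$ for cyclic covers of the disc (or, more abstractly, by the Grauert-Remmert extension theorem for normal analytic covers); the order condition (2) on $\varphi(c_i)$ is precisely what guarantees the consistency of this local model at each branch point and hence the existence of the global compact Riemann surface $C$.
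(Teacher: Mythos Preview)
Your proof sketch is correct and is the standard argument for the Riemann Existence Theorem in its group-theoretic form. Note, however, that the paper does not actually prove this proposition: it is stated without proof, introduced only as ``the following reformulation of the Riemann Existence Theorem,'' and treated as a classical fact. So there is no paper proof to compare against; your outline simply supplies what the authors take for granted.
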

If this is the case, then $g'$ is the genus of the quotient
Riemann surface $C':=C/G$ and the Galois covering $C \rightarrow C'$ is
branched in $r$ points $p_1,\dots,p_r$ with branching numbers
$m_1,\dots,m_r$ respectively. Moreover if $r=0$ the covering is said
to be \emph{unramified} or \emph{\'{e}tale}.

\begin{defin} Two systems of generators $\mathcal{V}_1:=(a_{1,1},b_{1,1},\dots,a_{1,g'_1},b_{1,g'_1},c_{1,1},\dots,c_{1,r_1})$ and $\mathcal{V}_2:=(a_{2,1},b_{2,1},\dots,a_{2,g'_2},b_{2,g'_2},c_{2,1},\dots,c_{2,r_2})$ of $G$ are said to have \emph{disjoint stabilizers} or simply to be
\emph{disjoint}, \index{Disjoint Vectors} if:
\begin{equation}\label{eq.sigmasetcond} \Sigma(\mathcal{V}_1)
\cap \Sigma(\mathcal{V}_2)= \{ 1 \},
\end{equation}
where $\Sigma(\mathcal{V}_i)$ is the set of elements in $G$ that stabilize a point in $C$,
\[ \Sigma(\mathcal{V}_i):= \bigcup_{h \in G} \bigcup^{\infty}_{j=0} \bigcup^{r_i}_{k=1} h \cdot c^j_{i,k} \cdot
h^{-1}.
\]
\end{defin}
We notice that in  the above definition only the spherical part of the system of generators plays a r\^ole. 

\begin{rem}\label{minimal}
From the above discussion we obtain that the datum of a surface
$S$ isogenous to a higher product of curves of unmixed type together with its minimal realization $S=(C_1 \times
C_2)/G$ is determined by the datum of a finite
group $G$ together with two
disjoint systems of generators $\mathcal{V}_1$ and $\mathcal{V}_2$ (for more details see e.g. \cite{BCG06}).
\end{rem}

\begin{rem}\label{rem_singcong} The condition of being
disjoint ensures that the action of $G$ on the product of the two
curves $C_1 \times C_2$ is free.

Indeed the cyclic groups $\left\langle
c_{1,1}\right\rangle,\dots,\left\langle c_{1,r_1}\right\rangle$ and
their conjugates provide the non-trivial stabilizers for the
action of $G$ on $C_1$, whereas $\left\langle
c_{2,1}\right\rangle,\dots,\left\langle c_{2,r_2}\right\rangle$ and
their conjugates provide the non-trivial stabilizers for the
action of $G$ on $C_2$. The singularities of $(C_1 \times C_2)/G$
arise from the points of $C_1 \times C_2$ with non-trivial
stabilizer, since the action of $G$ on $C_1 \times C_2$ is diagonal,
it follows that the set of all stabilizers
for the action of $G$ on $C_1 \times C_2$ is given by
$\Sigma(\mathcal{V}_1) \cap \Sigma(\mathcal{V}_2)$. 
\end{rem}
\begin{defin} Let $\tau_i:=(g'_i \mid m_{1,i}, \dots , m_{r_i,i})$ for $i=1,2$ be two types.  An \emph{(unmixed) ramification structure} of type $(\tau_1,\tau_2)$  for a finite group $G$, is a
pair $(\mathcal{V}_1,\mathcal{V}_2)$
of disjoint systems of generators of $G$, whose types are
$\tau_i$, and which satisfy:
\begin{equation}\label{eq.Rim.Hur.Condition}
\mathbb{Z} \ni \frac{|G|
(2g'_i-2+\sum^{r_i}_{l=1}(1-\frac{1}{m_{i,l}}))}{2}+1 \geq 2,
\end{equation}
for $i=1,2$.
\end{defin}
\begin{rem} Note that a group $G$ and a ramification structure determine the main numerical
invariants of the surface $S$. Indeed, by \eqref{eq.chi.isot.fib} and~\eqref{eq.RiemHurw} we obtain:
\begin{equation}\label{eq.pginfty}
4\chi(S)=|G|\cdot\left({2g'_1-2+\sum^{r_1}_{k=1}(1-\frac{1}{m_{1,k}})}\right)
\cdot\left({2g'_2-2+\sum^{r_2}_{k=1}(1-\frac{1}{m_{2,k}})}\right)=: 4\chi (|G|,(\tau_1,\tau_2)).
\end{equation}
\end{rem}

The most important property of surfaces isogenous to a product is their weak rigidity property.
\begin{theo}~\cite[Theorem 3.3, Weak Rigidity Theorem]{cat04}
Let $S=(C_1 \times C_2)/G$ be a surface isogenous to a higher
product of curves. Then every surface with the same
\begin{itemize}
\item topological Euler number and
\item fundamental group
\end{itemize}
is diffeomorphic to $S$. The corresponding  moduli space
$\mathcal{M}^{top}(S) = \mathcal{M}^{\it diff}(S)$ of surfaces
(orientedly) homeomorphic (resp. diffeomorphic) to $S$ is either
irreducible and connected or consists of two irreducible connected
components exchanged by complex conjugation.
\end{theo}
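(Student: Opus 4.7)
The plan is to reduce the statement to a rigidity theorem for Kähler surfaces whose fundamental group is a product of two surface groups, and then to invoke connectedness of the relevant Teichmüller space. Given a second surface $S'$ with the same topological Euler number and fundamental group as $S$, I would first exploit the short exact sequence in Proposition~\ref{isoinv} to single out $\pi_1(C_1)\times\pi_1(C_2)$ as a normal subgroup of index $|G|$ inside $\pi_1(S')\cong\pi_1(S)$. The corresponding connected étale Galois cover $\widetilde{S}'\to S'$ then carries a free $G$-action, has fundamental group $\pi_1(C_1)\times\pi_1(C_2)$, and Euler number $|G|\cdot e(S)=e(C_1\times C_2)$.

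The heart of the argument, which I expect to be the main obstacle, is the identification of $\widetilde{S}'$ itself as a product of curves. Here I would apply the Kähler rigidity statement based on Siu's theorem together with the theory of harmonic maps to hyperbolic Riemann surfaces: the two projections $\pi_1(\widetilde{S}')\twoheadrightarrow\pi_1(C_i)$ are induced by harmonic maps $\widetilde{S}'\to C_i^{\prime}$ to curves of the prescribed genus, Siu's rigidity forces each of them to be holomorphic or antiholomorphic, and the matching of Euler numbers rules out positive-dimensional fibers. The product map then assembles into a biholomorphism $\widetilde{S}'\cong C_1'\times C_2'$, after possibly replacing one factor by its complex conjugate, with $g(C_i')=g(C_i)$.

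Once this is in hand, the $G$-action on $\widetilde{S}'$ corresponds, via the fixed isomorphism of fundamental groups with $\pi_1(S)$, to the conjugation action of $G$ on $\pi_1(C_1)\times\pi_1(C_2)$. Since every holomorphic automorphism of a product of two curves of genus $\geq 2$ respects the factorization (by direct inspection of normal subgroups of a product of surface groups of genus $\geq 2$, whose quotients are surface groups), this $G$-action is diagonal. Therefore $S'\cong (C_1'\times C_2')/G$ is again isogenous to a higher product of curves with the same finite group and the same (unmixed) ramification structure $(\tau_1,\tau_2)$ as $S$, because both data are forced by the outer $G$-action on $\pi_1(C_1)\times\pi_1(C_2)$ transferred from $S$.

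To finish, I would vary the pair $(C_1',C_2')$ together with its $G$-action in the appropriate Teichmüller space. For fixed $G$ and fixed type $\tau_i$, curves with the prescribed $G$-action are parametrized by a contractible complex manifold $\mathcal{T}_i^{G}$, so the product $\mathcal{T}_1^{G}\times\mathcal{T}_2^{G}$ is also connected; dividing by the relevant mapping class subgroups and mapping into $\mathcal{M}_{y,x}$ produces a single irreducible connected subvariety containing both $S$ and $S'$. The only residual ambiguity is the exchange of the two factors allowed in the rigidity step, which is realised by complex conjugation (applied to one factor) and can produce a second such component exactly when the two factors play symmetric roles. Ehresmann's theorem applied to the resulting smooth connected family of projective surfaces then yields the diffeomorphism $S'\cong S$ and the equality $\mathcal{M}^{\mathrm{top}}(S)=\mathcal{M}^{\it diff}(S)$, giving the stated dichotomy.
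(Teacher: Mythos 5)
This theorem is not proved in the paper at all: it is imported verbatim from Catanese \cite{cat04} (with the key product-rigidity input from \cite{cat00}), so there is no internal argument to compare against. Your sketch reconstructs essentially the route of the cited proof: pass to the \'etale cover of $S'$ determined by the canonical subgroup $\pi_1(C_1)\times\pi_1(C_2)$, show that a compact K\"ahler surface with this fundamental group and Euler number $4(g(C_1)-1)(g(C_2)-1)$ must itself be a product of curves, check that the $G$-action is compatible with the factorization, and then get connectedness from the connectedness (indeed contractibility) of the Teichm\"uller spaces of curves with a $G$-action of fixed topological type, with Ehresmann giving the diffeomorphism statement. At that level the architecture is the right one.

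Two steps, however, do not hold up as written. First, the rigidity step: Siu's strong rigidity does not apply to one-dimensional targets, and ``holomorphic or antiholomorphic onto $C_i$'' is not the correct formulation. What is actually used is the fibration theorem (Siu, Beauville, Catanese): a surjection $\pi_1(\widetilde{S}')\twoheadrightarrow \Pi_{g_i}$ with $g_i\geq 2$ produces a holomorphic fibration onto \emph{some} curve $C_i'$ of genus $g_i$ whose complex structure is not prescribed in advance; and the Euler-number condition is not about ``positive-dimensional fibers'' of these fibrations but about a Zeuthen--Segre/Arakelov-type inequality forcing the product map $(f_1,f_2)\colon \widetilde{S}'\to C_1'\times C_2'$ to be an isomorphism in the equality case (\cite{cat00}, Theorem 3.4). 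Second, and more seriously, your account of the dichotomy is wrong: the possible second component does not arise from ``exchange of the two factors, realised by complex conjugation applied to one factor.'' Conjugating a single factor reverses the orientation of the product (complex dimension $2$ with one factor conjugated), so the resulting surface leaves $\mathcal{M}^{top}(S)$ altogether, while swapping the two factors yields an isomorphic quotient surface and hence no new component. The genuine residual ambiguity is that the recovered classifying data may be simultaneously antiholomorphic on both factors, i.e.\ one may recover the data of the conjugate surface $\bar S$; this is precisely why $\mathcal{M}^{top}(S)$ is either connected or consists of two components exchanged by global complex conjugation. Finally, your claim that every automorphism of $C_1'\times C_2'$ respects the factorization (hence that the $G$-action is diagonal) is false when $C_1'\cong C_2'$: automorphisms may swap the factors, which is exactly the mixed case that the theorem, as stated, also covers; your argument silently assumes the unmixed case.
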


\begin{rem}
Thanks to the Weak Rigidity Theorem, we have  that the moduli space
of surfaces isogenous to a product of curves with fixed invariants
--- a finite group $G$ and a type $(\tau_1,\tau_2)$ --- consists of a finite number of irreducible connected
components of $\mathcal{M}$. More precisely, let $S$ be a surface
isogenous to a product of curves of unmixed type with group $G$
and a pair of disjoint systems of generators of type
$(\tau_1,\tau_2)$. By~$\eqref{eq.pginfty}$ we have
$\chi(S)=\chi(|G|,(\tau_1,\tau_2))$, and consequently,
by~\eqref{eq.chi.isot.fib}
$K^2_S=K^2(|G|,(\tau_1,\tau_2))=8\chi(S)$, and
$e(S)=e(|G|,(\tau_1,\tau_2))=4\chi(S)$. Moreover, recall that the fundamental group of $S$ fits into the exact sequence \eqref{eq_fundGroupS} and
the subgroup $\pi_1(C_1) \times \pi_1(C_2)$  of $\pi_1(S)$ is unique, see \cite{cat00}. 
\end{rem}

Let us chose a pair $(\tau_1,\tau_2)$ of types.
Denote by $\mathcal{M}_{(G,(\tau_1,\tau_2))}$ the moduli space 
of isomorphism classes of surfaces isogenous to a product,
which have a minimal realization $(C_1 \times C_2)/G$ that
is given by a ramifications structure of type $(\tau_1,\tau_2)$
for the finite group $G$. This moduli space is obviously a
subset of the moduli space
$\mathcal{M}_{K^2(|G|,(\tau_1,\tau_2)),\chi (|G|,(\tau_1,\tau_2))}$.
With $y:=K^2(n,(\tau_1,\tau_2))$ and  
$x:=\chi (n,(\tau_1,\tau_2))$ we get:

\begin{lem}
\label{fund}
Given a positive integer $n$ and a pair $(\tau_1,\tau_2)$ of types,
then $\iota(x,y)$ is bounded from below by 
\[
\# \{ G\,,\: \mbox{ $G$ is a group of order $n$
with a ramification structure of type $(\tau_1,\tau_2)$} 
\}\big/\raisebox{-2mm}{iso.}
\]
\end{lem}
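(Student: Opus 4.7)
The strategy is to associate to each isomorphism class of a group $G$ of order $n$ admitting a ramification structure of type $(\tau_1,\tau_2)$ at least one irreducible component of $\mathcal{M}_{y,x}$, and then to argue that distinct isomorphism classes produce disjoint sets of components.

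First, given a representative $G$ together with a ramification structure $(\mathcal{V}_1,\mathcal{V}_2)$, I would invoke Proposition \ref{Prop_RiemEx} to extract, from each spherical system of generators, a compact Riemann surface $C_i$ on which $G$ acts faithfully; the inequality \eqref{eq.Rim.Hur.Condition} in the definition of a ramification structure precisely ensures $g(C_i)\geq 2$. The disjointness of $\mathcal{V}_1$ and $\mathcal{V}_2$, as spelled out in Remark \ref{rem_singcong}, implies that the diagonal $G$-action on $C_1\times C_2$ is fixed-point-free, so $S_G:=(C_1\times C_2)/G$ is a smooth surface isogenous to a higher product of curves. Formulas \eqref{eq.chi.isot.fib} and \eqref{eq.pginfty} yield $\chi(S_G)=\chi(|G|,(\tau_1,\tau_2))=x$ and $K_{S_G}^2=8\chi(S_G)=y$, so $S_G$ defines a point of $\mathcal{M}_{y,x}$ lying in some irreducible component.

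The core step is to show that non-isomorphic groups $G,G'$ produce points in disjoint sets of irreducible components. For this I would combine two ingredients already recalled: the short exact sequence
\[
1\longrightarrow \pi_1(C_1)\times\pi_1(C_2)\longrightarrow \pi_1(S_G)\longrightarrow G\longrightarrow 1
\]
from \eqref{eq_fundGroupS}, together with the uniqueness of the normal subgroup $\pi_1(C_1)\times\pi_1(C_2)$ inside $\pi_1(S_G)$ cited from \cite{cat00}. These two facts together recover $G$, up to isomorphism, from the abstract group $\pi_1(S_G)$ alone. Since $\pi_1$ is locally constant in families of smooth projective varieties and hence constant on each irreducible component of $\mathcal{M}_{y,x}$, any two surfaces in the same irreducible component share the same fundamental group; therefore $G\not\cong G'$ forces $S_G$ and $S_{G'}$ to lie in disjoint families of irreducible components. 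Summing the contribution of at least one component for each isomorphism class of group then yields the stated inequality.

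The main subtle point I would expect to have to handle carefully is the topological (indeed deformation) invariance of the extracted group $G$: one needs both the purely group-theoretic extraction of $G$ from $\pi_1(S_G)$ via the canonical subgroup $\pi_1(C_1)\times\pi_1(C_2)$, and the fact that $\pi_1$ of the canonical model does not vary under equisingular deformation. The rest of the argument is a direct unpacking of the equivalence summarized in Remark \ref{minimal} between surfaces isogenous to a product of unmixed type and pairs $(G,(\mathcal{V}_1,\mathcal{V}_2))$.
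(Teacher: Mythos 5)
Your proposal is correct and follows essentially the same route as the paper: the construction of the surfaces from $(G,(\mathcal{V}_1,\mathcal{V}_2))$ is the standard dictionary from the preliminaries, and the key step — recovering $G$ up to isomorphism from $\pi_1(S_G)$ via the exact sequence \eqref{eq_fundGroupS} together with the uniqueness of the subgroup $\pi_1(C_1)\times\pi_1(C_2)$ guaranteed by minimality, combined with the constancy of the fundamental group along irreducible components — is exactly the argument given in the paper's proof of Lemma \ref{fund}. Your version merely spells out the deformation-invariance of $\pi_1$ and the construction of $S_G$, which the paper leaves implicit.
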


\begin{proof}
It remains to prove that non-isomorphic group lead to distinct
irreducible components of the moduli space. Indeed, 
the fundamental groups of the minimal realizations fit into
sequences as given in Prop.\ref{isoinv}\eqref{eq_fundGroupS},
with non-isomorphic quotients. On the
other hand any isomorphism of the fundamental groups descends
to the quotients since the subgroups are preserved thanks to the minimality of the realizations. So our claim follows.
\end{proof}

\subsection{Enumerating $p$-groups}
\begin{prop}\cite{H60,S65}\label{prop: num p grups} If $f(k,p)$ is the number of groups of order $p^k$, $p$ a prime, and if $A=A(k,p)$ is defined by
\begin{equation}
f(k,p)=p^{Ak^3},
\end{equation}
then
\begin{equation}
\frac{2}{27}-\epsilon_k \leq A \leq \frac{2}{15}-\epsilon_k,
\end{equation}
where $\epsilon_k$ is a positive number, depending only on $k$, which tends to $0$ as $k$ tends to $\infty$.
\end{prop}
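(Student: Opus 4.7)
The plan is to split the inequality into the lower bound (due to Higman) and the upper bound (due to Sims), which require very different techniques. Since the statement is a classical result, I would essentially reproduce the two original arguments in skeletal form, with the optimization of parameters made explicit so that the constants $2/27$ and $2/15$ emerge.

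For the lower bound, the strategy is to exhibit a large family of pairwise non-isomorphic $p$-groups of class $2$ and exponent $p$. Fix integers $r,s$ with $r+s=k$ and consider groups $G$ whose Frattini quotient $V:=G/\Phi(G)$ is elementary abelian of rank $r$, whose Frattini subgroup $\Phi(G)$ lies in the centre, is elementary abelian of rank $s$, and equals the commutator subgroup $W:=[G,G]$. Such $G$ is encoded, up to isomorphism, by a surjective alternating bilinear map $\varphi\colon V\wedge V\to W$, with two maps giving isomorphic groups iff they differ by the natural action of $\GL(V)\times \GL(W)$. Counting gives at least $p^{s\binom{r}{2}}/|\GL_r(\mathbb{F}_p)||\GL_s(\mathbb{F}_p)|\geq p^{s\binom{r}{2}-r^2-s^2}$ isomorphism classes. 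Optimizing $s\binom{r}{2}$ under $r+s=k$ yields $r=2k/3$, $s=k/3$, and the leading term is $\tfrac{2}{27}k^3$; the lower-order corrections absorbed by $\GL$-quotients and by the restriction that $\varphi$ be surjective produce the error $\epsilon_k\to 0$.

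For the upper bound, the idea is that every group of order $p^k$ is a quotient of the free pro-$p$ group $F_d$ on $d\le k$ generators by an open normal subgroup of index $p^k$. The number of such subgroups is bounded by filtering $F_d$ by its Jennings (Zassenhaus) $p$-series $F_d=M_1\supseteq M_2\supseteq\cdots$, whose graded quotients form the free restricted Lie algebra on $d$ generators and whose dimensions are given by the Witt formula. A normal subgroup $N$ of index $p^k$ is determined by its images $N\cap M_i/N\cap M_{i+1}$ inside each graded piece; choosing such a flag amounts to a string of Grassmannian choices, and a term-by-term estimate using $\dim\mathrm{Gr}(a,b)\le a(b-a)$ produces a bound of the form $p^{Ak^3+O(k^2)}$. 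The optimization of the depth at which the filtration is truncated, together with the constraint that the quotient actually have order $p^k$, yields the exponent $\tfrac{2}{15}$.

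The main obstacle is the upper bound: the lower bound reduces rapidly to a single counting problem on alternating forms, but Sims's argument requires a careful two-step estimate, first bounding the number of lower-exponent-$p$-central series of a fixed $p$-group of order $p^k$, then bounding the number of nilpotent graded Lie algebras of a given Hilbert series. Both bounds must be tight enough that, after optimizing the breakpoint between the two regimes, the leading constant becomes exactly $2/15$ rather than anything larger. Since the full argument is long and purely group-theoretic, and since we need this only as a black box, I would then simply refer the reader to the original papers of Higman and Sims for the detailed estimates, having indicated where the constants $2/27$ and $2/15$ come from.
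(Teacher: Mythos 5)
The paper itself does not prove this proposition: it is quoted as a classical black box from Higman and Sims, and the only part reproduced in the text is the \emph{constructive} side, namely the explicit presentations of Frattini-class-$2$ groups (generators $g_i,h_j$, central elementary abelian Frattini subgroup, structure constants $c(i,i',j)$, with the optimal split $r=2s$, i.e.\ $r=2k/3$) that are reused later. Your treatment — sketch both bounds and then defer to the original papers — is therefore compatible with the paper's, and your lower-bound sketch is essentially Higman's construction, with the same optimization $s\binom{r}{2}$ subject to $r+s=k$ giving the constant $\tfrac{2}{27}$.

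Two points in your sketch are genuinely off, though. First, your lower-bound family ``class $2$ and exponent $p$'' only works for odd $p$: for $p=2$ a group of exponent $2$ is abelian, so the family collapses, while the proposition is stated for all primes and the paper applies it precisely at $p=2$. The correct formulation (and the one the paper records) is Frattini class $2$: $\Phi(G)$ central and elementary abelian, the data being the alternating commutator map \emph{together with} the $p$-th power map $V\to W$ (the exponents $b(i,j)$ in the paper's relation (4)); the extra $p^{rs}$ choices and the $\GL(V)\times\GL(W)$ quotient are absorbed into $\epsilon_k$, so the constant $\tfrac{2}{27}$ is unaffected, but the encoding must be stated this way for $p=2$. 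Second, your upper-bound account is misattributed and contains a step that fails as written: the bound $\tfrac{2}{15}$ in the proposition is Higman's, while Sims's theorem is the sharper $p^{\frac{2}{27}k^3+O(k^{8/3})}$; and a normal subgroup $N$ of the free (pro-)$p$ group is \emph{not} determined by its images in the graded quotients of the Jennings filtration, so the ``string of Grassmannian choices'' does not by itself count the quotients — one has to bound, as Higman and Sims do, the number of possibilities step by step along a central series of the quotient. Since you (like the paper) ultimately invoke the original papers, and since only the lower bound is used in the sequel, these defects do not undermine the use of the proposition, but the $p=2$ point should be corrected if any part of the sketch is kept.
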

We are interested in the constructive part of the proof, where a sufficient number of groups is given, all of them nilpotent of Frattini-class $2$, i.e.\ their Frattini subgroups are central and elementary abelian. Such groups are given by the following presentation.
Let $r$ and $s$ be positive integers with $s+r=k$ and $b(i,j)$, $1 \leq i \leq r$, $1 \leq j \leq s$, and $c(i,i^{\prime},j)$, $1\leq i < i^{\prime} \leq r$, $1 \leq j \leq s$, be integers between $0$ and $p-1$. Then the relations
\[
 \begin{array}{lll} 
(1) & [g_i,g_{i^{\prime}}]=h^{c(i,i^{\prime},1)}_1 \cdot \ldots \cdot h^{c(i,i^{\prime},s)}_s,  & 1\leq i < i^{\prime} \leq r, \\
 
(2) &  [g_{i},h_{j}]=1, & 1 \leq i \leq r, 1 \leq j \leq s, \\

(3) &  [h_j, h_{j^{\prime}}]=1, & 1 \leq j < j^{\prime} \leq s, \\
 
(4) &  g^p_i=h^{b(i,1)}_1 \cdot \ldots \cdot h^{b(i,s)}_s & 1\leq i \leq r, \\
 
(5) &  h^p_j=1, &  1\leq j \leq s, \\
  \end{array}
\] 
on $g_1, \ldots ,g_r$ and $h_1, \ldots h_s$ define a group of order $p^k$.

\begin{rem} We can make the following two restrictions which won't change the asymptotic number of $p-$groups considered.
\begin{itemize}
\item To prove Proposition \ref{prop: num p grups} is enough to consider groups with $r=2s$.
\item We can change (4) to $g^p_i=1$, and consider only groups which are generated by elements of order $p$. This means that the $b(i,j)$'s are set to zero. This is allowed since their proportion among all choices is negligible compered to the number of $c(i,i^{\prime},j)$'s as $s$ goes to infinity. 
\item We can consider only those groups which are generated by the $g_i$'s only. Indeed, such groups are characterized by the property that the $s$ vectors $c_j$ with entries $c_{i, i^{\prime}, j}$ are linearly independent. So the number of possible choices is 
\[ \prod^{s-1}_{l=0}(p^{(\begin{smallmatrix} r \\ 2 \end{smallmatrix})}-p^l).
\]
Again the deviation to $p^{(\begin{smallmatrix} r \\ 2 \end{smallmatrix})s}$ can be subsumed into $\epsilon_k$.
\end{itemize}
\end{rem}

{\bf Assuptions II:} We assume from now on that:
\begin{enumerate}
\item Let $r=2s$.
\item $p=2$. Nevertheless, what follows can be easily extended to $p >2$;
\item All the groups $G$ have a presentation as above with condition (4) changed into $g^p_i=1$.
\item   All the groups $G$ are generated by the $g_i$.
\end{enumerate} 

As seen in the previous section to give a surface isogenous to a product it is enough to give a finite group $G$ and a ramification structure of $G$. In this section we give one of the two systems of generators of a ramification structure that we keep fixed in the next two sections. We will complete the ramification structure of $G$ with a second system of generators, once in order to obtain regular surfaces and then to have irregular ones.

We consider the following system of generator of size $2s+2$ for a group of order $2^k=2^{3s}$.
\begin{equation}\label{eq_FirstGenerator}
T_1:=(g_1, \ldots ,g_s,\bar{g}_s,g_{s+1}, \ldots, g_{2s}, \bar{g}_{2s} ),
\end{equation}  

where $\bar{g}_s=(g_1 \cdots g_s)^{-1}$, $\bar{g}_{2s}=(g_{s+1} \cdots g_{2s})^{-1}$.
By construction we have $<T_1> \cong G$, $g_1 \cdot \ldots \cdot g_s \cdot \bar{g}_s \cdot g_{s+1} \cdot \ldots \cdot g_{2s} \cdot \bar{g}_{2s}=1_G$, and the orders of each element is $2$.    
This gives an action of a group $G$ of order $2^k$ on some curve $C_1$ of genus 
\begin{equation}\label{eq genusC1}
 g(C_1):=2^{3s-1}\Big(-2+\sum^{2s+2}_{l=1}(1-\frac{1}{2})\Big)+1=2^{3s-1}(s-1)+1.
\end{equation}
Moreover, $C_1 \rightarrow C_1/G \cong \PP^1$ branches in $2s+2$ points. In terms of Euler numbers we have:
\[ e(C_1)=2^{3s}\Big(e(\PP^1)-\sum^{2s+2}_{l=1}(1-\frac{1}{2})\Big)=2^{3s}(1-s).
\]


Now we give a criterion to see if two systems of generators are disjoint. Let denote by $H(G) \lhd G$ the subgroup of $G$ generated by the $h_j$'s and $\Phi\colon G \to G/H(G)$.
\begin{lem}\label{lem_imageGen} Let $\mathcal{T}_1$ and $\mathcal{T}_2$ be the two spherical parts of two systems generators $\mathcal{V}_1$ and $\mathcal{V}_2$ of $G$. Moreover, let $B_i=\{\Phi(x)| x\in \mathcal{T}_i, x \notin \langle h_j \rangle\}$ and  $B^{\prime}_i=\{x| x\in \mathcal{T}_i, x \in \langle h_j \rangle\}$. 
If $B_1 \cap B_2 = B^{\prime}_1 \cap B^{\prime}_2 = \emptyset$  then $\Sigma(\mathcal{V}_1) \cap \Sigma(\mathcal{V}_2)=1_G$. 
\end{lem}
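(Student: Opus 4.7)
The plan is to assume $1\neq x\in\Sigma(\mathcal{V}_1)\cap\Sigma(\mathcal{V}_2)$ and derive a contradiction, exploiting the structure of the Frattini subgroup $H(G)=\langle h_1,\ldots,h_s\rangle$. Relations (2), (3), (5) force $H(G)$ to be central and elementary abelian, while $g_i^2=1$ (Assumption II) together with $[g_i,g_{i'}]\in H(G)$ from relation (1) make the quotient $G/H(G)$ elementary abelian as well, in particular abelian.

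The first step is to push both sets of conjugates through the projection $\Phi\colon G\to G/H(G)$. Since $G/H(G)$ is abelian, conjugation becomes trivial modulo $H(G)$, and consequently
\[
\Phi(\Sigma(\mathcal{V}_i)) \;=\; \{\,\Phi(c_{i,k})^j \ :\ 1\le k\le r_i,\ j\ge 0\,\}.
\]
In the setting of this paper every $c_{i,k}$ has order $2$ (the branching data of the ramification structures considered later are $2^{2s+2}$ and $2^{2s-2q+2}$), so each $\Phi(c_{i,k})$ has order dividing $2$ and the non-trivial part of $\Phi(\Sigma(\mathcal{V}_i))$ is exactly $B_i$. If $\Phi(x)\neq 1$, then $\Phi(x)\in B_1\cap B_2=\emptyset$, a contradiction.

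The remaining case is $\Phi(x)=1$, i.e.\ $x\in H(G)$. Writing $x=h\,c_{1,k_1}^{j_1}\,h^{-1}$, the normality of $H(G)$ forces $c_{1,k_1}^{j_1}\in H(G)$ and the centrality of $H(G)$ then forces $x=c_{1,k_1}^{j_1}$. Since $c_{1,k_1}$ has order $2$, $x\neq 1$ forces $j_1$ odd, giving $x=c_{1,k_1}\in H(G)$ and hence $c_{1,k_1}\in B'_1$. Applying the same argument to $\mathcal{V}_2$ yields $x=c_{2,k_2}\in B'_2$, so $x\in B'_1\cap B'_2=\emptyset$, again a contradiction.

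The main obstacle is in this last case: one must use simultaneously that $H(G)$ is normal (to conclude $c_{1,k_1}^{j_1}\in H(G)$), that $H(G)$ is central (to strip the conjugator $h$), and that $c_{1,k_1}$ has order $2$ (to ensure $c_{1,k_1}^{j_1}\in\{1,c_{1,k_1}\}$). Without the order-$2$ hypothesis one would also have to control squares $c_{i,k}^2\in H(G)$ arising from elements of order $4$, a case not covered by $B'_1\cap B'_2=\emptyset$; however, the branching indices used later are all $2$, so this scenario does not arise.
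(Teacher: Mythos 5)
Your proof is correct and follows essentially the same route as the paper's: reduce modulo $H(G)$ via $\Phi$, use that the quotient is abelian to kill conjugation and invoke $B_1\cap B_2=\emptyset$ when $\Phi(x)\neq 1$, and use normality plus centrality of $H(G)$ together with $B_1'\cap B_2'=\emptyset$ when $x\in H(G)$. You merely spell out the case analysis more explicitly and make visible the order-$2$ assumption on the spherical-part elements, which the paper's proof also uses (implicitly, in its opening sentence).
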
 
\begin{proof}
Since the order of every element is $2$, it is enough to prove that 
\[ A_1 \cup A_2:= \big( \bigcup_{t \in G} \, \bigcup_{x_1 \in \mathcal{T}_1} tx_{1}t^{-1} \big) \cap \big(\bigcup_{t \in G} \, \bigcup_{x_2 \in \mathcal{T}_2} tx_{2}t^{-1}\big) = \emptyset.
\]
Since the kernel of $\Phi$ is $H(G)$ and the image is abelian, the images of the two sets $A_1$ and $A_2$ are exactly $B_1$ and $B_2$. By hypothesis $B_1 \cap B_2=\emptyset$, so 
\[\Sigma(\mathcal{V}_1) \cap \Sigma(\mathcal{V}_2)=\big(\Sigma(\mathcal{V}_1) \cap H\big) \cap \big(\Sigma(\mathcal{V}_2) \cap H\big)= \big(B^{\prime}_1\cup 1_G \big)\cap \big(B^{\prime}_2 \cup 1_G\big)=1_G.
\]
\end{proof}
\section{Regular Surfaces}
As said before, we give the second system of generators of a ramification structure for a group $G$ as above which yields a regular surface isogenous to a product. Let 
\begin{equation}
T_2:=(g_1g_2, g_2g_3, \ldots ,g_{s-1}g_s,g_sg_2g_3,(g_1g_2g_3)^{-1},g_{s+1}g_{s+2}, \ldots, g_{2s}g_{s+2}g_{s+3}, (g_{s+1}g_{s+2}g_{s+3})^{-1}),
\end{equation}
One can see that $<T_2> \cong G$ and by construction the product of the elements in $T_2$ is $1_G$. This yields a second $G$-Galois cover of $\PP^1$ ramified in $2s+2$ points. And again the genus of the curve is

\begin{equation}\label{eq genusC2}
g(C_2)=2^{3s-1}(s-1)+1.
\end{equation}

Moreover, the set $B_2:=\{ \Phi(x) | x \in T_2\}$ is disjoint from $B_1:=\{ \Phi(x) | x \in T_1\}$ so by the Lemma \ref{lem_imageGen} the pair $(T_1,T_2)$ is a ramification structure for $G$. The associated surface isogenous to a product $S$ has irregularity $q(S)=0$. 

This ramification structure can be given to any $2$-group as above. 

\begin{theo}\label{Thm regular Main} Let $h$ be number of connected components of regular surfaces isogenous to a product of curves admitting a group of order $2^{3s}$ and ramification structure of type $((0 | 2^{2s+2}),(0 | 2^{2s+2}))$, as above. Then for $s \rightarrow \infty$ we have  
\begin{equation}\label{eq hreg} 
h \geq 2^{Bs^3},
\end{equation}
where $2-\epsilon^{\prime}_s \leq B \leq \frac{18}{5} - \epsilon^{\prime}_s$, and $\displaystyle \lim_{s \rightarrow \infty}\epsilon^{\prime} = 0$.
All these surfaces are regular, i.e. $q(S)=0$ and 
\begin{equation}\label{eq_chiRegS}
\chi(S)=2^{3s-2}(s-1)^2.
\end{equation}
\end{theo}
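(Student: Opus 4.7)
The plan is to exhibit, for each isomorphism class of a suitable $2$-group of order $2^{3s}$, a distinct connected component of the moduli space by realising $T_1$ and $T_2$ as a ramification structure. The upper bound $B\le 18/5-\epsilon'_s$ is immediate from Proposition \ref{prop: num p grups}, since $h$ cannot exceed the total number of groups of order $2^{3s}$.

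Fix a group $G$ as in Assumption II. I would first verify that $(T_1,T_2)$ is a ramification structure of type $((0|2^{2s+2}),(0|2^{2s+2}))$. The spherical product $\prod T_i=1_G$ is built in for $T_1$ and, for $T_2$, follows from the telescoping $(g_ig_{i+1})(g_{i+1}g_{i+2})\cdots(g_{s-1}g_s)=g_1g_s$ (using $g_{i+1}^2=1$) against $g_sg_2g_3\cdot(g_1g_2g_3)^{-1}$, and similarly on the second block. Generation by $T_1$ is built into Assumption II; for $T_2$ the projection to $G/H(G)\cong\mathbb{F}_2^{2s}$ recovers each basis vector from $\bar g_1+\bar g_2+\bar g_3$ and the consecutive sums $\bar g_i+\bar g_{i+1}$, and Frattini's argument lifts this to $G$. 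Disjointness follows from Lemma \ref{lem_imageGen}: no element of either tuple sits in $H(G)$, while $T_1$ projects to single basis vectors and full-block sums, and $T_2$ to two- and three-fold sums, so $B_1\cap B_2=\emptyset$ for $s$ large. The delicate step is the order condition: using the centrality of $[G,G]\subseteq H(G)$, identities such as $(g_ig_{i+1})^2=[g_{i+1},g_i]$ and $(\bar g_s)^2=\prod_{i<j\le s}[g_j,g_i]$ turn \emph{order exactly two} into an explicit linear system on the commutator parameters $c(i,i',j)$ of rank $O(s^2)$ in the ambient $\mathbb{F}_2$-space of dimension $s\binom{2s}{2}\sim 2s^3$. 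Together with the bound $|\mathrm{Aut}(G)|\le 2^{O(s^2)}$ on the isomorphism quotient, this correction only affects $\epsilon'_s$, so the constructive part of Proposition \ref{prop: num p grups} (applied with $k=3s$, yielding exponent $27A\in[2-\epsilon'_s,18/5-\epsilon'_s]$) still produces at least $2^{Bs^3}$ non-isomorphic groups equipped with our ramification structure.

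For each such group, Proposition \ref{Prop_RiemEx} yields curves $C_1,C_2$ of the genus $2^{3s-1}(s-1)+1$ computed in \eqref{eq genusC1}; the diagonal $G$-action on $C_1\times C_2$ is free by disjointness, giving $S=(C_1\times C_2)/G$ isogenous to a product, with $q(S)=0$ from \eqref{eq_irregIsoToProd} (both quotients are $\mathbb{P}^1$) and $\chi(S)=(2^{3s-1}(s-1))^2/2^{3s}=2^{3s-2}(s-1)^2$ from \eqref{eq.chi.isot.fib}. Lemma \ref{fund}, together with the fundamental-group sequence \eqref{eq_fundGroupS}, then ensures that non-isomorphic groups sit in disjoint connected components of $\mathcal{M}$, giving $h\ge 2^{Bs^3}$. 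The main obstacle is the rank estimate for the order-two linear system together with the automorphism quotient; once absorbed into $\epsilon'_s$, every remaining step is either immediate from the construction or a direct citation of the preliminaries.
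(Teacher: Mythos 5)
Your proposal is correct and follows essentially the same route as the paper: count the Frattini-class-$2$ groups of order $2^{3s}$ via the constructive part of Proposition \ref{prop: num p grups}, equip each with the ramification structure $(T_1,T_2)$, separate non-isomorphic groups into distinct components by Lemma \ref{fund}, and compute $\chi(S)$ from the genera \eqref{eq genusC1} and \eqref{eq genusC2}. The only difference is that you explicitly verify generation of $T_2$ modulo the Frattini subgroup and impose the $O(s^2)$ linear conditions on the parameters $c(i,i',j)$ forcing the entries of $T_1$ and $T_2$ to have order exactly two (absorbing this, and the isomorphism quotient, into $\epsilon'_s$), a point the paper's much terser proof passes over silently, so your extra care is a harmless refinement of the same argument rather than a different one.
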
 
\begin{proof}
For fixed group order $2^{3s}$ the number of groups with this order is $2^{27As^3}$ by Proposition \ref{prop: num p grups}. For each of these groups we found a ramification structure $(T_1,T_2)$. By Lemma \ref{fund}, the number of surfaces isogenous to a product associated to those data is at least $2^{Bs^3}$, where $B$ is as in the claim. 
By \eqref{eq genusC1} and \eqref{eq genusC2} the holomorphic Euler characteristic of $S$ is 
\[ \chi(S)=\frac{(g(C_1)-1)(g(C_2)-1)}{|G|}=2^{3s-2}(s-1)^2.
\]

\end{proof}
\section{Irregular Surfaces}
Let us consider the irregular case i.e. $q(S)=q>0$. Let $h:=[g_{s},g_{2s}]\cdot [g_{s-1},g_{2s-1}] \cdots [g_{s-q+1}, g_{2s-q+1}]$ and recall that the commutators are in the center of the group $G$. Moreover, let 
\[
T^{\prime}_2:=\{g_1g_2, g_2g_3, \ldots ,g_{s-q}g_{s-q+1},g_{s-q+1}g_1,g_{s+1}g_{s+2}, \ldots ,\]
\[ g_{2s-q}g_{2s-q+1},
g_{2s-q+1}g_{s+1}h\}
\]

the spherical part of the generating vector
\[\mathcal{V}_2:=\{T^{\prime}_2,g_{s},g_{2s},g_{s-1},g_{2s-1}, \ldots g_{s-q+1}, g_{2s-q+1}\}
\]

It holds $<\mathcal{V}_2> \cong G$ and and by construction the product of the elements in $\mathcal{V}_2$ is $1_G$.
Moreover, by Proposition \ref{Prop_RiemEx} this yields an action of a group $G$ of order $2^{3s}$ on some curve $C_2$ of genus 
\[ g(C_2):=2^{3s-1}(2q-2+(s-q+1))+1=2^{3s-1}(s+q-1)+1
\]
with $g(C_2/G)=q$.

By construction and by Lemma \ref{lem_imageGen} $T_1$ and $T^{\prime}_2$ are disjoint and so $(T_1,\mathcal{V}_2)$ is a ramification structure for the groups as above. These data give us a surface isogenous to a product $S:=(C_1 \times C_2)/G$, where $G$ is a group of order $2^{3s}$, which satisfies the assumption above. The number of these surfaces is at least $2^{27As^3}$. Then the following theorem is proven in analogy to Theorem \ref{Thm regular Main}.
\begin{theo} Let $h$ be number of connected components of the moduli space of surfaces of general type isogenous to a product of curve admitting a group of order $2^{3s}$ and ramification structure of type $((0 | 2^{2s+2}),(q | 2^{2s-2q+2})$ as above. Then we have  
\[ h \geq 2^{Bs^3},
\]
where $2-\epsilon^{\prime}_s \leq B \leq \frac{18}{5} - \epsilon^{\prime}_s$, and
$\displaystyle \lim_{s \rightarrow \infty}\epsilon^{\prime} = 0$. All these surfaces are irregular and have irregularity $q(S)=q$. Finally,
The holomorphic Euler characteristic of $S$ is 
\[ \chi(S):=\frac{(g(C_1)-1)(g(C_2)-1)}{|G|}=2^{3s-2}(s-1)(s+q-1).
\]
\end{theo}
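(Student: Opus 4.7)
The plan is to proceed in close parallel to the proof of Theorem \ref{Thm regular Main}, keeping the first generating vector $T_1$ from \eqref{eq_FirstGenerator} but pairing it with the new system $\mathcal{V}_2$ of type $(q\mid 2^{2s-2q+2})$ constructed just above the statement. The asymptotic count again originates from Proposition \ref{prop: num p grups}: for each of the roughly $2^{27As^3}$ non-isomorphic groups $G$ of order $2^{3s}$ satisfying Assumptions II, the pair $(T_1,\mathcal{V}_2)$ will yield a ramification structure, and Lemma \ref{fund} then converts this count into a lower bound on the number of irreducible components of $\mathcal{M}$, with $B$ in the range $[2-\epsilon'_s,\,18/5-\epsilon'_s]$ obtained by multiplying the bounds on $A$ from Proposition \ref{prop: num p grups} by $27$.

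The genuinely new point to verify is that $\mathcal{V}_2$ is a generating vector of the prescribed type and that $(T_1,\mathcal{V}_2)$ is disjoint. Generation of $G$ is immediate, since $T'_2$ contains all consecutive products $g_ig_{i+1}$, and together with the hyperbolic entries $g_s,g_{2s},\dots,g_{s-q+1},g_{2s-q+1}$ one recovers each individual $g_i$, and the $g_i$'s generate $G$ by Assumption II(4). All spherical elements have order $2$ because each $g_i$ has order $2$ and the $g_i$ commute modulo the central elementary abelian subgroup $H(G)$. The long relation $c_1\cdots c_r\prod_{j=1}^{q}[a_j,b_j]=1$ holds by design: using $g_i^2=1$, the telescoping product of the elements of $T'_2$ collapses to the central element $h=[g_s,g_{2s}][g_{s-1},g_{2s-1}]\cdots[g_{s-q+1},g_{2s-q+1}]$ that has been inserted at the end of $T'_2$, and this exactly matches the product of the commutators $[g_{s-j+1},g_{2s-j+1}]$; since $h\in H(G)$ is an involution, the two contributions square to $1$.

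Disjointness of $T_1$ and $T'_2$ will follow from Lemma \ref{lem_imageGen}. In the elementary abelian quotient $G/H(G)$, written additively with basis $\bar g_1,\dots,\bar g_{2s}$, every element of $T_1$ maps either to a single $\bar g_i$ or to one of the long sums $\bar g_1+\cdots+\bar g_s$, $\bar g_{s+1}+\cdots+\bar g_{2s}$, whereas every element of $T'_2$ maps to a sum $\bar g_i+\bar g_j$ of two distinct basis vectors (the central factor $h$ in the last entry vanishes in the quotient). For $s\geq 3$ the sets $B_1$ and $B_2$ are therefore disjoint; moreover no element of either spherical part lies in $H(G)$, so $B'_1\cap B'_2=\emptyset$ as well, and Lemma \ref{lem_imageGen} gives $\Sigma(T_1)\cap \Sigma(T'_2)=\{1\}$.

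Finally, the numerical invariants follow from Proposition \ref{isoinv}: since $C_1/G\cong\PP^1$ and $C_2/G$ has genus $q$ by the Riemann--Hurwitz data of $\mathcal{V}_2$, one gets $q(S)=q$; combining \eqref{eq genusC1} with $g(C_2)=2^{3s-1}(s+q-1)+1$ then yields the stated $\chi(S)=2^{3s-2}(s-1)(s+q-1)$. The main obstacle in the whole plan is really the careful bookkeeping of the central correction $h$ in the last spherical entry of $T'_2$: once this engineered cancellation is in place and disjointness is checked in $G/H(G)$, the remainder of the argument is a direct transcription of the regular case.
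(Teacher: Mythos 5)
Your proposal follows the paper's own route essentially verbatim: you keep the fixed vector $T_1$ of \eqref{eq_FirstGenerator}, pair it with the vector $\mathcal{V}_2$ containing the central correction $h$, check disjointness through Lemma \ref{lem_imageGen} by passing to $G/H(G)$, convert the group count of Proposition \ref{prop: num p grups} into a component count via Lemma \ref{fund} with $B=27A$, and recover $g(C_2)=2^{3s-1}(s+q-1)+1$ and $\chi(S)=2^{3s-2}(s-1)(s+q-1)$ by Riemann--Hurwitz, exactly as in the proof of Theorem \ref{Thm regular Main} to which the paper appeals ``in analogy''. Your verification of the long relation (the telescoping product of $T'_2$ equals $h$, which cancels against $\prod_{j=1}^{q}[a_j,b_j]=h$ because $h^2=1$) and of $B_1\cap B_2=\emptyset$, $B'_1=B'_2=\emptyset$ for $s\geq 3$ is correct and fills in steps the paper leaves implicit.

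One of your added justifications, however, is not valid as stated: the claim that every entry of $T'_2$ has order $2$ ``because the $g_i$ commute modulo the central elementary abelian subgroup $H(G)$''. In a group of Frattini class $2$ with $g_i^2=1$ one has $(g_ig_j)^2=g_i^2g_j^2[g_j,g_i]=[g_j,g_i]$, so an entry such as $g_1g_2$ has order $2$ only when $[g_1,g_2]=1$, which fails for the generic choice of the structure constants $c(i,i',j)$ on which the whole count is based; generically such entries (and likewise $\bar g_s,\bar g_{2s}$ in $T_1$) have order $4$. This does not make your argument diverge from the paper --- the paper asserts the same type $(q\mid 2^{2s-2q+2})$ without proof, and its genus and $\chi$ formulas depend on it --- but your stated reason is not a proof of that step. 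To repair it one would either restrict to the subfamily of groups in which the relevant squares vanish (this is $O(s^2)$ linear conditions over $\mathbb{F}_2$ on the roughly $2s^3$ parameters $c(i,i',j)$, hence absorbed into $\epsilon'_s$ without changing the asymptotics), or else recompute the branching data allowing order-$4$ entries, which would alter the stated type and the value of $\chi(S)$.
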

\section{Proof of Theorem \ref{thm main}}

We give the proof for regular surfaces only, the irregular case being analogous.

We start with \eqref{eq_chiRegS} and we write
\begin{equation*}
x_s=2^{3s-2}(s-1)^2.
\end{equation*}
Since $x_s$ is strictly monotonically increasing with $s$, there is a well defined inverse function $s=s(x)$. From $\log_2 x_s = 3s-2+2\log_2(s-1)$ we decuce 
\begin{equation*}
s(x)=\frac{1}{3}(1+\eta_{x_s})\log_2 x_s
\end{equation*}
with $\eta_{x_s} \rightarrow 0$ for $x_s \rightarrow \infty$. Substituting $s(x)$ into the inequality \eqref{eq hreg} we get
\begin{equation*}
h \geq 2^{\frac{B}{27}(\log_2 x_s)^3(1+\eta_{x_s})^3}.
\end{equation*} 
For $x_s$ large enough this is bounded from below by $2^{\frac{2}{9}(\ln x_s)^3}$, thanks to $0 < 27(\ln 2)^3<9$. 

We use the identity $x^{f(x)}=e^{f(x)\ln x}=2^{f(x)\frac{1}{\ln 2}\ln x}$ to derive 
\begin{equation*}
h \geq x_s^{(\frac{2}{27(\ln 2)^2}(\ln x_s)^2)\frac{1}{\ln 2}\ln x_2}=x_s^{\frac{2}{13}(\ln x_s)^2},
\end{equation*}
thanks to $0 <27(\ln 2)^2<13$. Since $y_s$ is a constant multiple of $x_s$ the asymptotic is the same. This concludes the proof of the theorem.



\end{document}